
\documentclass{amsart}

\usepackage{amsmath,amssymb,amsthm}

\hyphenation{mani-fold mani-folds sub-mani-fold sub-mani-folds topo-logy
Topo-logy geo-metry Geo-metry mono-dromy}

\newtheorem{prop}{Proposition}[section]
\newtheorem{thm}[prop]{Theorem}
\newtheorem{lem}[prop]{Lemma}

\theoremstyle{definition}

\newtheorem{rem}[prop]{Remark}
\newtheorem*{nota}{Notation}


\newcommand{\bfone}{\mathbf{1}}

\newcommand{\bfa}{\mathbf{a}}
\newcommand{\alphast}{\alpha_{\mathrm{st}}}

\newcommand{\rmd}{\mathrm{d}}

\newcommand{\bfe}{\mathbf{e}}

\newcommand{\lk}{\mathtt{lk}}
\newcommand{\lambdac}{\lambda_{\mathrm{c}}}
\newcommand{\lambdas}{\lambda_{\mathrm{s}}}

\newcommand{\bfn}{\mathbf{n}}

\newcommand{\R}{\mathbb{R}}

\newcommand{\SO}{\mathrm{SO}}
\newcommand{\ttsl}{\mathtt{sl}}

\newcommand{\bfv}{\mathbf{v}}

\newcommand{\xist}{\xi_{\mathrm{st}}}

\newcommand{\Z}{\mathbb{Z}}

\DeclareMathOperator{\Int}{Int}


\begin{document}

\author[S.~Durst]{Sebastian Durst}
\author[H.~Geiges]{Hansj\"org Geiges}
\address{Mathematisches Institut, Universit\"at zu K\"oln,
Weyertal 86--90, 50931 K\"oln, Germany}
\email{sdurst@math.uni-koeln.de}
\email{geiges@math.uni-koeln.de}

\author[J. Gonzalo]{Jes\'us Gonzalo P\'erez}
\address{Departamento de Matem\'aticas, Universidad Aut\'onoma
de Madrid, 28049 Madrid, Spain}
\email{jesus.gonzalo@uam.es}

\author[M.~Kegel]{Marc Kegel}
\address{Institut f\"ur Mathematik, Humboldt-Universit\"at zu Berlin,
Unter den Linden 6, 10099 Berlin, Germany}
\email{kegemarc@math.hu-berlin.de}

\thanks{H.~G.\ is partially supported by the SFB/TRR 191
``Symplectic Structures in Geometry, Algebra and Dynamics''.
M.~K.\ is supported by the Berlin Mathematical School.}

\title[Parallelisability of $3$-manifolds]{Parallelisability of $3$-manifolds
via surgery}

\date{}

\begin{abstract}
We present two proofs that all closed, orientable $3$-manifolds
are parallelisable. Both are based on the Lickorish--Wallace surgery
presentation; one proof uses a refinement of this presentation
due to Kaplan and some basic contact geometry. This complements
a recent paper by Benedetti--Lisca.
\end{abstract}

\keywords{$3$-manifold, parallelisability, surgery, contact
structure, transverse knot, self-linking number}

\subjclass[2010]{57R25; 57R65, 53D10, 57M99}

\maketitle


\section{Introduction}
Our aim in this note is to present two proofs of the following
well-known theorem.

\begin{thm}
\label{thm:main}
Every closed, orientable $3$-manifold is parallelisable.
\end{thm}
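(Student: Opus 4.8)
The plan is to build on the Lickorish--Wallace presentation and pass to a bounding $4$-manifold. By that theorem we may write $M$ as the result of integral surgery on a framed link $L\subset S^3$; let $W$ denote the trace of this surgery, i.e.\ the oriented $4$-manifold obtained from $D^4$ by attaching one $2$-handle along each component of $L$ with the prescribed framing, so that $\partial W=M$. Since $S^3$ bounds $D^4$ and the collar gives $TW|_M\cong TM\oplus\R$, a trivialisation of $TW$ immediately exhibits $M$ as \emph{stably} parallelisable. Thus the argument splits into two independent tasks: trivialising $TW$, and upgrading stable parallelisability of $M$ to genuine parallelisability.

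The second task is the softer one. An oriented rank-$3$ bundle over the $3$-complex $M$ is classified by a map to $B\SO(3)$, and the fibre $\SO/\SO(3)$ of the stabilisation $B\SO(3)\to B\SO$ is $2$-connected with $\pi_3(\SO/\SO(3))\cong\Z$. Hence a stable trivialisation destabilises up to a single obstruction in $H^3(M;\Z)$, and this obstruction is precisely the Euler class of $TM$. As $M$ is closed and odd-dimensional, $\langle e(TM),[M]\rangle=\chi(M)=0$, so the obstruction vanishes and $TM$ is trivial. First I would isolate this step, so that the whole problem reduces to parallelising $W$.

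For the first task, $TW$ is already trivial over the $0$-handle $D^4$, so it suffices to extend the trivialisation across the $2$-handles. The core of each $2$-handle is a disc attached along a framed circle, and the obstruction to extending a trivialisation of an oriented rank-$4$ bundle over such a $2$-cell lies in $\pi_1(\SO(4))\cong\Z/2$; concretely it records the parity of the corresponding framing. Since $W$ is homotopy equivalent to a wedge of $2$-spheres, there are no higher obstructions ($H^{\ge 4}(W)=0$), so $W$ is parallelisable exactly when all these framing obstructions vanish, equivalently when $W$ is spin. \emph{This framing obstruction is the heart of the matter}: the link supplied by Lickorish--Wallace need not have even framings. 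The resolution is to invoke Kaplan's refinement, which provides a surgery presentation of $M$ by a framed link all of whose framings are even; then the intersection form of $W$ is even, $W$ is spin, and $TW$ trivialises.

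Finally, I would recast this framing control in contact-geometric terms for the second, more self-contained, proof. Here one starts from the standard tight contact structure $\xist$ on $S^3$, whose contact plane field is a trivial bundle and which, together with its Reeb field, trivialises $TS^3=\xist\oplus\R$. Representing the surgery link by a \emph{transverse} link, the self-linking number $\ttsl$ measures the discrepancy between the contact framing and the surgery framing, and controlling it lets one arrange the even framings needed above while extending the trivialisation of the contact distribution across each surgery. The main obstacle in both routes is thus the same $\Z/2$ framing obstruction, handled either by Kaplan's combinatorial refinement or by a judicious choice of transverse representative and its self-linking number.
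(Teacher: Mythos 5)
Your route---pass to the trace $W$ of the surgery, use Kaplan to make all framings even so that $W$ is spin and hence parallelisable, then destabilise $TW|_M\cong TM\oplus\R$---is not either of the paper's two detailed proofs; it is precisely the ``purely topological proof'' sketched in the paper's closing Remark (citing \cite[Theorem~9.3]{foma97} and \cite{beli18}). Your first task is handled correctly: the only obstruction to framing $W$ is the parity class in $\pi_1(\SO_4)\cong\Z_2$ over each $2$-handle, and Kaplan's theorem kills it. The problem is in your second task, the destabilisation step, which as written contains a genuine error.

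First, $\pi_3(\SO/\SO_3)\cong\Z_2$, not $\Z$: from the fibration $\SO_3\to\SO\to\SO/\SO_3$ and $\pi_2(\SO_3)=0$, this group is the cokernel of the stabilisation $\pi_3(\SO_3)\to\pi_3(\SO)$, which is multiplication by~$2$ (the generator $q\mapsto(x\mapsto qx\bar{q})$ of $\pi_3(\SO_3)$ yields a bundle over $S^4$ with $p_1=\pm4$, twice the stable generator, which has $p_1=\pm2$). So your obstruction lives in $H^3(M;\Z_2)$, and it cannot be the integral Euler class; in fact $e(\xi)$ of \emph{any} oriented rank-$3$ bundle over a closed oriented $3$-manifold is automatically zero ($2e(\xi)=0$ for odd rank, and $H^3(M;\Z)\cong\Z$ is torsion-free), so the Euler class carries no information here. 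Second, and more fundamentally, the class you describe obstructs \emph{homotopy of lifts} of the classifying map through $B\SO_3\to B\SO$, i.e.\ it distinguishes pairs (bundle, stable trivialisation), not isomorphism classes of bundles. Concretely, take $M=S^3$ with the stable trivialisation $TS^3\oplus\R\cong\R^4$ given by the outward normal: the associated difference map $S^3\to S^3$ has degree $1$, so your obstruction is non-zero, yet $TS^3$ is of course trivial. Thus vanishing of that class is not the right criterion, and your appeal to $\chi(M)=0$ does not close the argument.

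The standard repair is different and easy: stable triviality forces $w_2(TM)=0$ (Stiefel--Whitney classes are stable invariants), and since $\pi_3(B\SO_3)=\pi_2(\SO_3)=0$, an oriented rank-$3$ bundle over a $3$-complex is trivial if and only if $w_2=0$ (equivalently: lift the structure group to $\mathrm{Spin}(3)=\mathrm{SU}(2)$ and note that $B\mathrm{SU}(2)$ is $3$-connected). Alternatively, the paper's Remark offers a finish that avoids obstruction theory altogether: a $4$-frame on $W$ defines a quaternionic structure $(I,J,K)$ on $TW$, and projecting $I\bfn, J\bfn, K\bfn$ onto $TM$ along the outward normal $\bfn$ parallelises $M$. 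One further small misreading: in the paper's contact-geometric proof, the contact topology is not used to ``arrange'' even framings---that is exactly Kaplan's job; rather, the oddness of the self-linking number of transverse knots is the fact showing that \emph{given} an even coefficient, the contact frame extends over the surgery torus.
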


In a recent paper~\cite{beli18}, Benedetti and Lisca give three
new ``bare hands'' proofs of this result. They also sketch
four earlier proofs. We refer to their paper for the history of
these proofs and a discussion of their relative merits.

Our first proof of Theorem~\ref{thm:main} is in some sense
the most elementary yet. Its basic ingredient is the Lickorish--Wallace
surgery presentation of $3$-manifolds. By modifying a given
framing along surfaces (with boundary) transverse to one vector
in the $3$-frame, one can ensure the extendability
of an initial framing on the $3$-sphere over the surgery tori.
Some simple linear algebra allows us to show that the required modifications
are possible.

This proof is implicit in an earlier paper~\cite{gego97} of two of the present
authors. The contact circles on $3$-manifolds constructed there
in particular give rise to a parallelisation. Apparently, this aspect
has gone largely unnoticed, so it seems opportune to extract
a completely self-contained proof of parallelisability from that
paper.

Our second proof of Theorem~\ref{thm:main} is less elementary,
relying as it does on a refined surgery presentation due to
Kaplan and some contact topology. However, this proof compensates
for it by being extremely short. Kaplan's result can be proved
by some clever but simple moves in Kirby diagrams. The contact topology
used is essentially the fact that transverse knots in contact $3$-manifolds
have odd self-linking number. We only need this for knots transverse
to the standard contact structure in the $3$-sphere, and we indicate a direct
proof of the result in this particular case.
\section{A surgical proof}
\label{section:surgery}
\subsection{The surgery presentation}
Let $M$ be a closed, orientable $3$-manifold. According to
the surgery presentation theorem of Lickorish \cite{lick62}
and Wallace~\cite{wall60}, cf.\ \cite[Section~9.I]{rolf76},
$M$ can be constructed as follows.

Start with a solid torus $S^1\times D^2$, embedded in
the $3$-sphere $S^3$ in the standard way.
Denote a meridian $*\times
\partial D^2$ by $\lambda_{\infty}$ and a longitude $S^1\times *$
(with $*\in\partial D^2$) by $\sigma_{\infty}=\mu_{\infty}$. In using
this
notation we think of $\lambda_{\infty},\mu_{\infty}$ as longitude and
meridian of a complementary solid torus $S^3\setminus\Int(S^1\times D^2)$.
In particular, the longitude $\mu_{\infty}$ is homologically
trivial in this complementary solid torus.

Choose a pure braid of unknots $K_i$, $i=1,\ldots, n$, in $S^1\times D^2$,
that is, each $K_i$ is the graph of a map $S^1\rightarrow\Int(D^2)$.
Let $N_i$ be disjoint tubular neighbourhoods
of the~$K_i$. The solid tori $N_i$ may be assumed to intersect
each slice $\{*\}\times D^2$ in a single disc. Write
\[ M_0:= (S^1\times D^2)\setminus\bigcup_{i=1}^n \Int(N_i)\]
for the $3$-manifold with boundary obtained by cutting out
the interiors of these solid tori.

On the boundary $2$-torus $\partial N_i$ there is a (homologically)
well-defined meridian $\mu_i$ that generates the kernel of
$\pi_1(\partial N_i)\rightarrow\pi_1(N_i)$.
Moreover, there is a distinguished longitude $\lambda_i$ on $\partial N_i$,
characterised by being homologically trivial in $S^3\setminus\Int(N_i)$.
The $\lambda_i$ and $\mu_i$ generate~$\pi_1(\partial N_i)$.

Now choose simple closed curves $\sigma_i$ on $\partial N_i$ homologous to
$\mu_i\pm \lambda_i$, $i=1,...,n$. Glue solid tori
$V_i\cong S^1\times D^2$, $i=1,...,n,\infty$, to the
boundary components of $M_0$ such that $\sigma_i$
becomes a meridian in $V_i$. For the appropriate choices in this
construction, the resulting $3$-manifold will be diffeomorphic
to the given~$M$.
\subsection{Extending framings over solid tori}
Write $\theta$ for the coordinate in the $S^1$-factor
of the solid torus $S^1\times D^2$, and $x,y$ for cartesian
coordinates on the $D^2$-factor. On $S^1\times D^2$ we have the
parallelisation given by
\[ \bfe_1=\partial_{\theta},\;\;\bfe_2=\partial_x,\;\;\bfe_3=\partial_y.\]

Observe that when we perturb this frame slightly to make
$\bfe_1$ transverse to the boundary torus $T^2=\partial(S^1\times D^2)$,
the pair $(\bfe_2,\bfe_3)$ induces a framing of $T^2$ by
projection along~$\bfe_1$.
Relative to the Lie group framing of $T^2$, this induced frame
makes no twist along the longitude $\mu_{\infty}$, and
a single twist along the meridian~$\lambda_{\infty}$. The
\emph{Lie group framing} on $T^2=S^1\times S^1$ is the one obtained
by transporting a basis at the identity element under the
left-action of the Lie group. An explicit example is $(\partial_{\varphi},
\partial_{\theta})$, where $\varphi$ is the angular coordinate in the
$xy$-plane.

The fact that $\pi_1(\SO_3)=\Z_2$ and $\pi_2(\SO_3)=\{1\}$
translates into the following extendability condition.

\begin{lem}
\label{lem:extend}
Let $(\bfv_1,\bfv_2,\bfv_3)$ be a trivialisation of
$T(S^1\times D^2)$ along the boundary $T^2=\partial(S^1\times D^2)$,
with $\bfv_1$ transverse to the boundary. This trivialisation extends over
$S^1\times D^2$ if and only if the framing of $T^2$ induced by
$(\bfv_2,\bfv_3)$ makes an odd number of turns with respect to
the Lie group framing of $T^2$ along some (and hence any)
meridian $\{*\}\times\partial D^2$.
\qed
\end{lem}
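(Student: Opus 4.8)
The plan is to reduce the statement to obstruction theory, using the global background frame $(\bfe_1,\bfe_2,\bfe_3)$ as a reference. Since $(\bfe_1,\bfe_2,\bfe_3)$ already trivialises $T(S^1\times D^2)$, any trivialisation $(\bfv_1,\bfv_2,\bfv_3)$ along $T^2$ is encoded (after orthonormalising, using the deformation retraction of $\mathrm{GL}_3^+$ onto $\SO_3$) by a comparison map $g\co T^2\to\SO_3$, and the given frame extends over $S^1\times D^2$ if and only if $g$ extends to a map $S^1\times D^2\to\SO_3$. I would then build $S^1\times D^2$ from its boundary by attaching a single $2$-cell along a meridian $\{*\}\times\partial D^2$ and then a single $3$-cell. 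By obstruction theory the only obstruction to extending $g$ over the $2$-cell is the class of the restriction $g|_{\{*\}\times\partial D^2}$ in $\pi_1(\SO_3)=\Z_2$; and because $\pi_2(\SO_3)=\{1\}$, once this obstruction vanishes the extension over the $3$-cell is automatic. Hence the frame extends if and only if $g$ restricted to a meridian is null-homotopic in $\SO_3$.

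The main step is to identify this $\Z_2$-obstruction with the parity of the number of turns of the induced framing. Along a meridian $\{*\}\times\partial D^2$ the vector $\bfv_1$ is transverse to $T^2$, so I would first rotate the frame $(\bfv_1,\bfv_2,\bfv_3)$ -- without changing the homotopy class of $g|_{\{*\}\times\partial D^2}$ -- until $\bfv_1$ agrees with the positive normal to $T^2$; this is legitimate because the transverse vectors with a fixed sign of normal component form a contractible set, so the adjustment is a homotopy. After this adjustment $g$ takes values in the subgroup $\SO_2\subset\SO_3$ fixing the normal, and the inclusion $\SO_2\subset\SO_3$ induces reduction modulo $2$ on $\pi_1(\SO_2)=\Z\to\pi_1(\SO_3)=\Z_2$. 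Thus $[g|_{\{*\}\times\partial D^2}]\in\Z_2$ is exactly the number of turns that the induced framing $(\bfv_2,\bfv_3)$ makes relative to $(\bfe_2,\bfe_3)$ along the meridian, read modulo $2$. Since any two meridians are isotopic in $T^2$, this parity is independent of the choice of meridian, which accounts for the phrase ``some (and hence any)''.

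Finally I would convert the count relative to $(\bfe_2,\bfe_3)$ into a count relative to the Lie group framing. By the observation recorded just before the lemma, the induced frame $(\bfe_2,\bfe_3)$ makes exactly one turn relative to the Lie group framing along a meridian, so the number of turns of $(\bfv_2,\bfv_3)$ relative to the Lie group framing differs by one from its number of turns relative to $(\bfe_2,\bfe_3)$, and hence has opposite parity. Combining this with the previous paragraph, the frame extends precisely when $(\bfv_2,\bfv_3)$ makes an even number of turns relative to $(\bfe_2,\bfe_3)$, that is, an odd number of turns relative to the Lie group framing, as claimed. The step I expect to be the main obstacle is the middle one: justifying carefully that the full $\SO_3$-valued obstruction is computed by the induced planar framing alone, i.e.\ that aligning $\bfv_1$ with the normal does not affect the obstruction class and that the inclusion $\SO_2\subset\SO_3$ realises reduction modulo $2$ on fundamental groups.
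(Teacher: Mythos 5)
Your proposal is correct and is essentially the paper's own argument: the paper states this lemma with no written proof beyond the preceding remark that it ``translates'' the facts $\pi_1(\SO_3)=\Z_2$ and $\pi_2(\SO_3)=\{1\}$, and your obstruction-theoretic write-up (2-cell along the meridian, 3-cell free by $\pi_2(\SO_3)=\{1\}$, identification of the $\Z_2$-obstruction with the winding of the induced planar framing via $\pi_1(\SO_2)\to\pi_1(\SO_3)$ being reduction mod $2$) is exactly the intended expansion of that remark. The only detail to tidy is that the reference vector $\bfe_1$ must be aligned with the normal by the same contractibility argument you apply to $\bfv_1$, so that the comparison map genuinely lands in the stabiliser $\SO_2$.
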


Now perturb the parallelisation of $M_0$ given by
$(\bfe_1,\bfe_2,\bfe_3)$ so as to make $\bfe_1$ transverse to
the boundary $\partial M_0$. Then, with respect to
the Lie group framing of the boundary tori, the induced framing
makes no turn along the longitudes
$\lambda_1,\ldots,\lambda_n,\mu_{\infty}$, and $\pm 1$ turns
along the meridians $\mu_1,\ldots,\mu_n$. Hence, the framing
makes $\pm 1$ turns along the surgery curves $\sigma_1,\ldots,\sigma_n$,
and no turn along the surgery curve $\sigma_{\infty}$.

\begin{nota}
From now on, $(\bfe_1,\bfe_2,\bfe_3)$ will denote this
slightly perturbed frame with $\bfe_1$ transverse to~$\partial M_0$.
\end{nota}

We therefore need to change the parity of the twisting along
$\sigma_{\infty}$ without affecting the parity along the
other surgery curves. We shall achieve this by introducing
additional twists into the framing along surfaces (with boundary)
transverse to~$\bfe_1$.
\subsection{Surfaces in $M_0$}
A surface $\Sigma\subset M_0$ with boundary is called
\emph{properly embedded} if it is embedded, its boundary $\partial\Sigma$
lies on~$\partial M_0$, and $\Sigma$ is transverse to~$\partial M_0$.

We begin with the following simple observation.

\begin{lem}
\label{lem:twist}
Let $\Sigma\subset M_0$ be a properly embedded surface.
Let $(\bfv_1,\bfv_2,\bfv_3)$ be a parallelisation of
$M_0$ with $\bfv_1$ transverse to~$\Sigma$.
Then there is a framing $(\bfv_1,\bfv'_2,\bfv'_3)$
of $M_0$ that coincides with $(\bfv_1,\bfv_2,\bfv_3)$ outside a
tubular neighbourhood of $\Sigma$ and such that $(\bfv'_2,\bfv'_3)$
makes one additional twist about $\bfv_1$ as one passes~$\Sigma$.
\qed
\end{lem}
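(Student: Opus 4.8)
The plan is to produce the new frame by an explicit rotation of the pair $(\bfv_2,\bfv_3)$ supported in a tubular neighbourhood of $\Sigma$, leaving $\bfv_1$ untouched throughout. The first step is to exploit the transversality hypothesis: since $\bfv_1$ is nowhere tangent to $\Sigma$, it provides a nowhere-zero section of the (rank-one) normal bundle of $\Sigma$ in $M_0$, which therefore trivialises. Flowing along $\bfv_1$ then identifies a tubular neighbourhood of $\Sigma$ with the product $\Sigma\times(-\epsilon,\epsilon)$ in such a way that $\bfv_1$ points in the positive $t$-direction, where $t$ denotes the coordinate on the interval factor. I would set this product structure up compatibly with $\partial M_0$ near $\partial\Sigma$, using that $\Sigma$ is transverse to $\partial M_0$, so that $t$ is a globally well-defined coordinate on the whole neighbourhood.

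Next I would choose a smooth function $\psi\co(-\epsilon,\epsilon)\to\R$ with $\psi\equiv 0$ near $t=-\epsilon$ and $\psi\equiv 2\pi$ near $t=+\epsilon$, and define the new frame by rotating within the plane field spanned by $\bfv_2$ and $\bfv_3$:
\[
\bfv'_2=\cos\psi(t)\,\bfv_2+\sin\psi(t)\,\bfv_3,\qquad
\bfv'_3=-\sin\psi(t)\,\bfv_2+\cos\psi(t)\,\bfv_3.
\]
For each fixed $t$ the coefficient matrix lies in $\SO_2$ and is in particular invertible, so $(\bfv'_2,\bfv'_3)$ spans the same $2$-plane as $(\bfv_2,\bfv_3)$; hence $(\bfv_1,\bfv'_2,\bfv'_3)$ is again a trivialisation of $TM_0$ over the neighbourhood. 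Because $\psi$ is locally constant (equal to $0$ and $2\pi$, both of which give the identity rotation) near the two ends of the interval, the pair $(\bfv'_2,\bfv'_3)$ agrees with $(\bfv_2,\bfv_3)$ outside a compact part of $\Sigma\times(-\epsilon,\epsilon)$, so the definition extends by the original frame to all of $M_0$ and yields a global parallelisation coinciding with $(\bfv_1,\bfv_2,\bfv_3)$ outside the tubular neighbourhood.

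Finally I would observe that, by construction, as one traverses $\Sigma$ in the positive $\bfv_1$-direction the rotation angle increases from $0$ to $2\pi$, so $(\bfv'_2,\bfv'_3)$ makes exactly one additional full twist about $\bfv_1$ relative to $(\bfv_2,\bfv_3)$, as required. I do not expect a genuine obstacle here; the only point demanding care is the first step, namely arranging a single product neighbourhood $\Sigma\times(-\epsilon,\epsilon)$ on which $t$ is globally defined, including along the boundary curves $\partial\Sigma\subset\partial M_0$. This is where the transversality of $\bfv_1$ to $\Sigma$ and of $\Sigma$ to $\partial M_0$ is used, and once it is in place the rotation construction is entirely routine.
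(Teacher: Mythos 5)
Your proof is correct and is precisely the argument the authors have in mind: the paper records this lemma as a simple observation with the proof omitted, and the intended justification is exactly your compactly supported rotation of $(\bfv_2,\bfv_3)$ about $\bfv_1$ through an angle running from $0$ to $2\pi$ across a collar $\Sigma\times(-\epsilon,\epsilon)$. One small adjustment: near $\partial\Sigma$ the collar should be produced by a vector field transverse to $\Sigma$ but \emph{tangent} to $\partial M_0$ (such a field exists because $\Sigma$ is transverse to $\partial M_0$), rather than by the flow of $\bfv_1$ itself, which in the paper's application is transverse to $\partial M_0$ along $\partial\Sigma$ and would push points out of $M_0$; the rotation construction only needs the collar coordinate $t$, not that $\bfv_1$ coincide with $\partial_t$.
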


Let $\Sigma_{\infty}$ be the element in $H_2(M_0,\partial M_0)$
(with integral coefficients understood)
represented by $(*\times D^2)\cap M_0$.
We shall allow ourselves to identify $\Sigma_{\infty}$ and
other elements of $H_2(M_0,\partial M_0)$ with a particular
surface representing this class.

By the definition of $\lambda_i$ there is an element $\Sigma_i$
in $H_2(M_0,\partial M_0)$ represented by
an annulus with boundary curves $\mu_{\infty}$ and $\lambda_i$,
and with a certain number of discs removed where the $N_j$,
$j\neq i$, cut this annulus.

Denote the linking number of the unknots $K_i$
and $K_j$ (the souls of $N_i$ and $N_j$,
respectively) by~$l_{ij}$; here $K_i$ and $K_j$ are regarded
as knots in~$S^3$. Only the value of $l_{ij}$ modulo~2
is relevant to our problem, so we need not worry about a
sign convention for this linking number.

For $\Sigma$ a surface properly embedded in $M_0$,
and $\sigma$ a curve in $\partial M_0$ transverse to~$\partial\Sigma$,
we write $\# (\sigma ,\Sigma )$ for
their intersection number, in other words,
the intersection number of $\sigma$ and $\partial\Sigma$
as curves on~$\partial M_0$. Again we only count this modulo~2.

Now consider an element $\Sigma\in H_2(M_0,\partial M_0)$ of the form
\[ \Sigma =\sum_{i=1}^n a_i\Sigma_i+a_{\infty}\Sigma_{\infty}. \]

\begin{lem}
\label{lem:Sigma}
For $|a_{\infty}|$ sufficiently large, $\Sigma$ can be represented
by a union of $n$ properly embedded but not necessarily
disjoint surfaces transverse to~$\bfe_1$.
\end{lem}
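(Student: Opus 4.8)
The starting point is that $M_0$ fibres over the circle, $\pi\co M_0\to S^1$, $(\theta,x,y)\mapsto\theta$, with fibre the $n$-holed disc $F=D^2\setminus\bigcup_i\Int(D_i)$; the unperturbed field $\partial_\theta$ is everywhere transverse to the fibres and, along $\partial M_0$, is tangent to the $\theta$-circles, namely the longitudes $\mu_{\infty}$ and $\lambda_1,\dots,\lambda_n$. A properly embedded surface transverse to $\bfe_1$ must therefore meet each boundary torus in a $1$-manifold transverse to these $\theta$-circles, and this already pins down where the hypothesis comes from. Computing the boundary of $\Sigma=\sum_i a_i\Sigma_i+a_{\infty}\Sigma_{\infty}$ torus by torus, the algebraic intersection of $\partial\Sigma$ with the $\theta$-circle is $\pm a_{\infty}$ on $T^2_{\infty}$ and $\pm\bigl(a_{\infty}-\sum_{i\neq j}a_i l_{ij}\bigr)$ on each $\partial N_j$. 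For any transverse representative to exist these must all be non-zero, and for the local pictures to glue consistently they should share a common sign; both hold as soon as $|a_{\infty}|$ exceeds $\max_j\sum_{i\neq j}|a_i|\,l_{ij}$.

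To prove sufficiency I would build the surfaces by tilting. A representative of $\Sigma_i$ is the ``vertical'' annulus swept out, as $\theta$ runs around $S^1$, by a fixed arc in the slice $\{\theta\}\times D^2$ joining $\partial N_i$ to $\partial D^2$; this annulus is a union of $\partial_\theta$-flow lines and so is everywhere tangent to $\bfe_1$. Now let the arc rotate through a total angle $2\pi m_i$ as $\theta$ traverses $S^1$. The resulting spiralled annulus is still embedded, being a graph over the $(\theta,\mathrm{arc})$-cylinder, and a one-line inspection of its tangent plane shows that, the angular speed $m_i$ being non-zero, no tangent vector is proportional to $\partial_\theta$; hence it is transverse to $\bfe_1$. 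Reading off its boundary, it winds $m_i$ extra times along $\lambda_{\infty}$ on $T^2_{\infty}$ and meets each tube it sweeps across in roughly $m_i$ further meridians, so that its class is $\Sigma_i+m_i\Sigma_{\infty}$ up to sign.

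The surface claimed by the lemma is then a union $S_1\cup\dots\cup S_n$ of such tilted surfaces, allowed to intersect one another, with $S_i$ carrying the class $a_i\Sigma_i$ together with its share of $a_{\infty}\Sigma_{\infty}$, and with the spiral counts chosen so that the boundary slopes match those of $\Sigma$ on every torus while the total $\Sigma_{\infty}$-coefficient is exactly $a_{\infty}$. The sign consistency forced by $|a_{\infty}|$ being large is precisely what guarantees that all the spirals can be taken to turn the same way, so that the pieces assemble into globally transverse, properly embedded surfaces rather than developing tangencies to $\bfe_1$ where they meet the tubes.

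I expect the main obstacle to be exactly this global step: verifying that once the boundary curves on all tori have been made transverse to $\partial_\theta$ with a coherent orientation, they can be filled in by surfaces that remain transverse to $\bfe_1$ throughout the interior in the presence of the (possibly non-trivial) braiding of the $K_i$, all while keeping precise control of the homology class. By contrast, the transversality of each elementary tilted annulus is a purely local linear-algebra check, and the appearance of ``$|a_{\infty}|$ sufficiently large'' is explained already by the boundary computation of the first step.
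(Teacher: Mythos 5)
Your construction is the paper's construction: the ``spiralled annulus'' swept by a rotating arc, one surface for each index $i$, absorbing parallel copies of $\Sigma_{\infty}$ and allowed to intersect the others, is exactly the helicoidal surface in the paper's proof (which works in $T^2\times[0,1]$, the complement of $N_i$ in $S^1\times D^2$ after straightening $K_i$). The gap is in the transversality step, and it sits precisely at the point you yourself flag as ``the main obstacle'' and then leave unresolved. Your one-line tangent-plane inspection shows transversality of the rigid-rotation spiral to the \emph{unperturbed} field $\partial_{\theta}$, and for that model any $m_i\neq 0$ indeed suffices. But the lemma asserts transversality to $\bfe_1$, which by the Notation of Section~2 is the \emph{perturbed} frame, and moreover the arc cannot rotate rigidly: its inner endpoint has to track the braided strand $K_i$, so the isotopy of arcs has a braid-following component. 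A point of the swept surface is tangent to $\partial_{\theta}$ exactly where the normal velocity of the moving arc vanishes, and for small $m_i$ the braid-following velocity can cancel the rotational velocity; likewise the angular component of $\bfe_1-\partial_{\theta}$ can match a slow spiral. So ``$m_i\neq 0$'' does not suffice, and your alternative explanation of the hypothesis --- that large $|a_{\infty}|$ is needed only to make the boundary slopes sign-consistent --- misidentifies its role.

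What closes the argument, and is how the paper argues, is a uniform estimate. Identify $(S^1\times D^2)\setminus\Int(N_i)$ with $T^2\times[0,1]$ by a $\theta$-preserving diffeomorphism straightening $K_i$, with coordinates $(\theta,\varphi,t)$, so that the spiral of speed $m$ becomes $\{\varphi=m\theta+c\}$, with tangent planes spanned by $\partial_t$ and $\partial_{\theta}+m\,\partial_{\varphi}$ and with boundary $\lambda_i+m\mu_i$ and $\mu_{\infty}+m\lambda_{\infty}$. In these coordinates write $\bfe_1=A\,\partial_{\theta}+B\,\partial_{\varphi}+C\,\partial_t$; then $A>0$, and $|B/A|$ is bounded by a constant depending only on the braiding and on the boundary perturbation of $\bfe_1$ --- in particular independent of $m$. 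The spiral fails to be transverse to $\bfe_1$ only at points where $B/A=m$, so any $m$ exceeding $\sup|B/A|$ gives transversality everywhere. This is what ``$|a_{\infty}|$ sufficiently large'' buys: it makes the slope of the helicoid so small that its tangent planes stay uniformly away from $\bfe_1$. The other tubes $N_j$, $j\neq i$, then require no extra care: the spiral need not avoid them, and removing the discs where it meets them preserves both proper embeddedness and transversality. With this estimate inserted, your argument is complete and coincides with the paper's.
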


\begin{proof}
We may consider the $\Sigma_i$ individually, and for our application
it suffices to consider the cases $a_i=0$ and $a_i=1$, and
$a_{\infty}>0$; the general case is analogous.
For $a_i=0$ we can take $a_{\infty}=1$. For
$a_i=1$, the union of $\Sigma_i$ and $a_{\infty}$ disjoint
parallel copies of $\Sigma_{\infty}$ is homologically equivalent
to a helicoidal surface in $T^2\times [0,1]$
(with discs removed where it intersects the other~$N_j$),
where $T^2\times\{0\}\equiv
\partial N_i$ and $T^2\times\{1\}\equiv S^1\times\partial D^2\subset
\partial M_0$. By increasing $a_{\infty}$, the slope of this
helicoidal surface can be made as small as necessary to achieve
transversality with~$\bfe_1$.
\end{proof}

\subsection{Modifying the framing of~$M_0$}
Modulo $2$ we have the following intersection numbers.
\[ \begin{array}{lcl}
\# (\sigma_{\infty},\Sigma_{\infty}) & = & 1,\\
\# (\sigma_{\infty},\Sigma_i) & = & 0,\;\;\;i=1,\ldots,n,
\end{array} \]
and, for $i=1,\ldots,n$,
\[ \begin{array}{lcl}
\# (\sigma_i,\Sigma_{\infty}) & = & 1,\\
\# (\sigma_i,\Sigma_i) & = & 1,\\
\# (\sigma_i,\Sigma_j) & = & l_{ij}\;\;\; \mbox{\rm for}\;\; i\neq j.
\end{array} \]

Now take a collection $\Sigma$ of surfaces as in Lemma~\ref{lem:Sigma},
and introduce a twist into the frame $(\bfe_1,\bfe_2,\bfe_3)$
as in Lemma~\ref{lem:twist} along each component of~$\Sigma$.
Since we want to change the parity of the number of twists
along~$\sigma_{\infty}$, we require (modulo~$2$)
\[ a_{\infty}=1. \]
Along the $\sigma_i$ we need to keep the parity unchanged,
which translates into
\[ a_i+\sum_{j\neq i}l_{ij}a_j+a_{\infty}=0.\]
Theorem~\ref{thm:main} is then an immediate consequence of the
following lemma.

\begin{lem}
The linear system of equations
\[ a_i+\sum_{j\neq i}l_{ij}a_j=1\]
over $\Z_2$, where $l_{ij}=l_{ji}$, always has a solution.
\end{lem}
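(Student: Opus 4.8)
The plan is to recast the system as a single matrix equation over the field $\Z_2$ and then apply the Fredholm alternative. Let $A=(A_{ij})$ be the $n\times n$ matrix over $\Z_2$ with $A_{ii}=1$ and $A_{ij}=l_{ij}$ for $i\neq j$; since $l_{ij}=l_{ji}$, the matrix $A$ is symmetric, and the system to be solved is $A\bfa=\bfone$, where $\bfone$ denotes the all-ones vector. Over any field, $A\bfa=\bfone$ is solvable precisely when $\bfone$ lies in the image of $A$, and the standard fact $\operatorname{Im}A=(\ker A^{\mathrm{T}})^{\perp}$ (with respect to the standard bilinear pairing on $\Z_2^n$) reduces this to showing that $\bfone$ is orthogonal to every vector in $\ker A^{\mathrm{T}}$. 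As $A$ is symmetric, $A^{\mathrm{T}}=A$, so it suffices to prove that $\bfone$ is orthogonal to $\ker A$.

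The key step, and the one that makes the whole argument work, is the evaluation of the quadratic form $\mathbf{x}^{\mathrm{T}}A\mathbf{x}$ over $\Z_2$. The off-diagonal terms occur in equal pairs $A_{ij}x_ix_j$ and $A_{ji}x_jx_i$ whose sum vanishes modulo~$2$, and each diagonal square collapses since $x_i^2=x_i$ in $\Z_2$. This leaves
\[ \mathbf{x}^{\mathrm{T}}A\mathbf{x}=\sum_{i=1}^n A_{ii}x_i=\sum_{i=1}^n x_i=\langle\bfone,\mathbf{x}\rangle, \]
where the middle equality uses that every diagonal entry of $A$ equals~$1$. In other words, the diagonal of $A$ is exactly the linear functional detecting orthogonality to $\bfone$, and it is precisely here that the special form of the problem (right-hand side equal to the diagonal) is exploited.

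With this identity in hand, take any $\mathbf{x}\in\ker A$. Then $A\mathbf{x}=0$ forces $\mathbf{x}^{\mathrm{T}}A\mathbf{x}=0$, and the displayed identity gives $\langle\bfone,\mathbf{x}\rangle=0$. Hence $\bfone$ is orthogonal to $\ker A$, so $\bfone\in\operatorname{Im}A$ and the system $A\bfa=\bfone$ has a solution.

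I expect the main obstacle to be conceptual rather than computational: at face value the statement is a generic solvability question for a symmetric $\Z_2$ system, for which one has no right to expect a solution. What rescues it is the coincidence that $\bfone$ equals the diagonal of $A$ together with the collapse of the quadratic form modulo~$2$. Once one sees that $\mathbf{x}^{\mathrm{T}}A\mathbf{x}$ reads off $\langle\bfone,\mathbf{x}\rangle$, the Fredholm alternative closes the argument at once, and no further calculation beyond verifying the bilinear-form identity should be needed.
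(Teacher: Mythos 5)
Your proposal is correct and takes essentially the same route as the paper: the paper's criterion that every linear relation among the rows of $L$ must also be satisfied by the entries of $\bfone$ is precisely the Fredholm alternative you invoke, and its computation $\sum_{i,j=1}^{k}l_{ij}\equiv k \pmod 2$ for $k$ rows summing to zero is exactly your identity $\mathbf{x}^{\mathrm{T}}A\mathbf{x}=\langle\bfone,\mathbf{x}\rangle$ evaluated on the indicator vector of those rows. In short, this is the paper's proof restated in cleaner, coordinate-free language.
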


\begin{proof}
Set $l_{ii}=1$, and let $L$ be the
$(n\times n)$-matrix $(l_{ij})$. Write $\bfa$ for the column vector
with entries $a_1,...,a_n$. Let $\bfone$ be the column vector with
all $n$ entries equal to~$1$. Then the linear system of equations can
be written as $L\bfa=\bfone$. To show
that this system has a solution for any choice of $L$ (with
$l_{ij}=l_{ji}$ and $l_{ii}=1$) it is sufficient to show that any
linear relation satisfied by the rows of $L$ is also satisfied by the
rows (i.e.~entries) of~$\bfone$. This reduces to showing that
the sum of $k$ rows of $L$ can only be the zero row if $k$ is even.
By exchanging the columns it suffices to show this for the sum of the
first $k$ rows. If this sum is the zero row, then in
particular $\sum_{i,j=1}^kl_{ij}=0$, hence
$k\equiv\sum_{i<j}2l_{ij}=0$.
\end{proof}
\section{A contact geometric proof}
\subsection{Contact geometry background}
The standard contact structure on $S^3\subset\R^4$ is the
tangent $2$-plane field $\xist$ defined as the kernel of the $1$-form
\[ \alphast=x_1\,\rmd y_1-y_1\,\rmd x_1+x_2\,\rmd y_2-y_2\,\rmd x_2\]
(restricted to the tangent bundle $TS^3$).
A framing of $S^3$ adapted to this contact structure is given by
\begin{eqnarray*}
\bfe_1 & = & x_1\partial_{y_1}-y_1\partial_{x_1}+x_2\partial_{y_2}-
             y_2\partial_{x_2},\\
\bfe_2 & = & x_1\partial_{x_2}-x_2\partial_{x_1}+y_2\partial_{y_1}-
             y_1\partial_{y_2},\\
\bfe_3 & = & x_1\partial_{y_2}-y_2\partial_{x_1}+y_1\partial_{x_2}-
             x_2\partial_{y_1}.
\end{eqnarray*}
Here $\bfe_1$ is transverse to~$\xist$; the vector fields $\bfe_2,\bfe_3$
span the plane field~$\xist$.

The plane field $\xist$ is maximally non-integrable, that is,
the defining $1$-form $\alphast$ has the property that
$\alphast\wedge\rmd\alphast$ is a volume form on~$S^3$.
A consequence of this non-integrability is that any knot in
$S^3$ can be $C^0$-approximated by a knot transverse
to $\xist$ and isotopic to the original
knot~\cite[Theorem~3.3.1]{geig08}.

The \emph{self-linking number} $\ttsl(K)$ of a knot $K\subset S^3$ transverse
to $\xist$ is defined as follows. Push $K$ in the direction of
the vector field $\bfe_2$ trivialising $\xist$ to obtain a parallel
copy $K'$ of~$K$. Then set $\ttsl (K)$ equal to the linking number $\lk(K,K')$
of $K$ and $K'$. (A chosen orientation of $K$ defines one of~$K'$;
the value of $\lk(K,K')$ is independent of this choice.)
\subsection{Surgery along a single knot}
Suppose we want to perform surgery along a knot $K$ with
integral surgery coefficient~$n$. By the preceding section we may
assume that $K$ is transverse to~$\xist$. Then $(\bfe_2,\bfe_3)$
defines a framing of~$K$, i.e.\ a trivialisation of its normal
bundle, corresponding to the parallel knot~$K'$. We think of $K'$
as a longitude $\lambdac$ on the boundary of a closed tubular neighbourhood
$\nu K$ of~$K$. We also consider a longitude $\lambdas\subset\partial(\nu K)$
corresponding to the surface framing of~$K$, that is, the framing defined
by an embedded compact, orientable surface in $S^3$ with boundary~$K$.
Writing $\mu$ for the meridian of $\nu K$, we have
\[ \lambdac=\lambdas+\ttsl(K)\cdot\mu\]
by the definition of $\ttsl(K)$.

Surgery along $K$ with coefficient $n\in\Z$ then means that we cut out
$\Int(\nu K)$ and glue in a solid torus $S^1\times D^2$ by sending
its meridian $*\times\partial D^2$ to
\[ n\mu+\lambdas=(n-\ttsl(K))\cdot\mu+\lambdac.\]

\begin{lem}
The framing $(\bfe_1,\bfe_2,\bfe_3)$ on $S^3\setminus\Int(\nu K)$
extends over the glued-in torus if and only if $n-\ttsl(K)$ is odd.
\end{lem}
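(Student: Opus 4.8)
The plan is to reduce this to the extendability criterion already established in Lemma~\ref{lem:extend}. That lemma tells us that a boundary trivialisation of a solid torus (with its first vector transverse to the boundary) extends over the solid torus precisely when the framing induced on the boundary torus by the remaining two vectors makes an \emph{odd} number of turns, relative to the Lie group framing, along a meridian of the glued-in solid torus. So the whole task is to compute the parity of the number of turns that the frame $(\bfe_2,\bfe_3)$ makes along the curve that becomes the meridian after surgery, namely $n\mu+\lambdas$.

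First I would set up the bookkeeping of turning numbers along the three relevant curves $\mu$, $\lambdas$, and $\lambdac$ on $\partial(\nu K)$. The key input is the meaning of $\ttsl(K)$: by definition, pushing $K$ off along $\bfe_2$ gives $\lambdac$, and the identity $\lambdac=\lambdas+\ttsl(K)\cdot\mu$ records how the contact framing differs from the surface framing. Since $\lambdac$ is by construction the curve along which the frame $(\bfe_2,\bfe_3)$ makes \emph{no} net turn (the push-off is parallel in this framing), I would establish that the number of turns of $(\bfe_2,\bfe_3)$ relative to the Lie group framing is $0$ along $\lambdac$ and $1$ along $\mu$ (the meridian contributing the single unavoidable twist, exactly as in the unknotted model computation of Section~\ref{section:surgery}). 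These two normalisations, together with additivity of turning number in homology, determine the turning number along any curve $a\mu+b\lambdac$, and hence along $n\mu+\lambdas=(n-\ttsl(K))\mu+\lambdac$.

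The computation then runs as follows. Writing the surgery meridian as $(n-\ttsl(K))\cdot\mu+\lambdac$ and using additivity, the number of turns of $(\bfe_2,\bfe_3)$ along it equals $(n-\ttsl(K))$ times the turning along $\mu$ (which is $1$) plus $1$ times the turning along $\lambdac$ (which is $0$); modulo $2$ this is just $n-\ttsl(K)$. By Lemma~\ref{lem:extend} the framing extends over the glued-in torus if and only if this number is odd, i.e.\ if and only if $n-\ttsl(K)$ is odd, which is the claim.

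The main obstacle I anticipate is pinning down the two normalising turning numbers rigorously, and in particular being careful about the distinction between the Lie group framing of the abstract torus $\partial(\nu K)$ and the framing $(\bfe_2,\bfe_3)$ coming from the ambient contact structure. The statement that $(\bfe_2,\bfe_3)$ makes zero turns along $\lambdac$ is essentially a restatement of how $\ttsl$ was defined via the $\bfe_2$-push-off, but making the comparison with the Lie group framing precise requires identifying the correct longitude–meridian basis and checking that the single twist along $\mu$ matches the model computation in the unknotted case. Once that turning-number normalisation is fixed, the rest is a one-line additive calculation modulo~$2$, so the whole argument is short.
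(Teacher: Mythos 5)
Your proposal is correct and takes essentially the same approach as the paper: the paper's proof likewise perturbs $\bfe_1$ to be transverse to $\partial(\nu K)$, notes that $(\bfe_2,\bfe_3)$ makes one twist along $\mu$ and none along $\lambda_{\mathrm{c}}$ relative to the Lie group framing, and then applies Lemma~\ref{lem:extend} to the surgery meridian $(n-\mathtt{sl}(K))\mu+\lambda_{\mathrm{c}}$. Your write-up merely makes explicit the additivity-of-turning-numbers step that the paper leaves implicit.
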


\begin{proof}
Analogous to the considerations in Section~\ref{section:surgery}
we may assume that, after a small perturbation, $\bfe_1$ is transverse
to~$\partial(\nu K)$, and $(\bfe_2,\bfe_3)$ induces a framing
of $\partial(\nu K)$ that makes a single twist, relative to
the Lie group framing, along the meridian $\mu$
and no twist along the longitude $\lambdac$. Now apply
Lemma~\ref{lem:extend}.
\end{proof}
\subsection{Proof of Theorem~\ref{thm:main}}
We now use the following fact from contact topology,
see~\cite[Remark~4.6.35]{geig08}.

\begin{lem}
The self-linking number of any knot $K\subset S^3$ transverse to
$\xist$ is odd.
\qed
\end{lem}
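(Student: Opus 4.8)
The plan is to show that any transverse knot $K$ in $(S^3,\xist)$ has odd self-linking number $\ttsl(K)=\lk(K,K')$, where $K'$ is the pushoff of $K$ in the direction of the trivializing field $\bfe_2$. My strategy is to compare this contact framing against the surface framing and to exploit the fact that the vector field $\bfe_2$ is a \emph{globally defined} nonvanishing section of $\xist$ over all of $S^3$, not just along $K$.

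First I would fix a Seifert surface $F$ for $K$, giving the surface longitude $\lambdas$; by definition $\lk(K,K'_{\mathrm{s}})=0$ where $K'_{\mathrm{s}}$ is the surface pushoff. The relation $\lambdac=\lambdas+\ttsl(K)\cdot\mu$ already recorded in the paper then reduces the problem to computing the difference in framings, i.e.\ the number of turns the field $\bfe_2|_K$ makes relative to a vector tangent to $F$ along $K$. So the key quantity is the rotation of the contact framing relative to the surface framing. The plan is to identify this rotation with an obstruction-theoretic count: extend $\bfe_2$ as a section of $\xist$ over the Seifert surface $F$ and count its zeros with sign. Since $\bfe_2$ is nonvanishing along $\partial F=K$, the signed zero count equals the relative Euler number $\langle e(\xist|_F), [F,\partial F]\rangle$ computed with the boundary framing coming from $\bfe_2$, and this is precisely $\ttsl(K)$ up to sign.

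The crux is then a parity statement: this relative Euler number is always odd. I would establish this by relating $e(\xist)$ to $e(TS^3)$ and using that $\xist$ together with $\bfe_1$ splits $TS^3$ as $\R\bfe_1\oplus\xist$. Over a closed surface the Euler number of $\xist$ is even (it is the evaluation of a characteristic class coming from the spin structure / the tangent bundle of $S^3$, which is trivial), so the parity of the \emph{relative} Euler number over $F$ is governed entirely by the boundary correction term. The boundary term measures the discrepancy between the $\bfe_2$-framing and the framing of $\xist|_K$ induced by the surface, and one checks this contributes the factor forcing oddness. Concretely, I expect to reduce to the computation that the Euler class of $\xist$ evaluated against a closed surface is even, combined with the self-intersection/mod-$2$ count of how a generic section of $\xist$ over $F$ vanishes.

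The main obstacle will be making the boundary framing bookkeeping precise: I must track two competing trivializations of $\xist|_K$ (the one from $\bfe_2$ and the one from the surface $F$) and verify that their relative winding has the correct parity, since an off-by-one or sign error would flip odd to even. For the stated special case of $\xist$ on $S^3$, I expect a cleaner direct route: since $\bfe_2$ is a specific explicit field, one can reduce to the case of the transverse unknot (where $\ttsl=-1$ is computed by hand) and argue that any two transverse knots are related by transverse stabilizations and isotopies, each of which changes $\ttsl$ by an \emph{even} amount. This reduces the whole statement to the base computation $\ttsl(\text{unknot})=-1$ plus an invariance-mod-$2$ argument, which is the approach I would actually carry out for the self-contained proof promised in the introduction.
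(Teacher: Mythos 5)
Your first approach breaks down at its foundation. You propose to extend $\bfe_2$ over the Seifert surface $F$ as a section of $\xist|_F$ and claim that the signed count of zeros equals $\ttsl(K)$ up to sign. But the very fact you emphasize---that $\bfe_2$ is a globally defined nonvanishing section of $\xist$---forces this count to be \emph{zero}: the relative Euler number depends only on the boundary values of the section, and $\bfe_2|_F$ itself is an extension without zeros. The self-linking number is instead the relative Euler number computed with the \emph{surface} framing $\lambdas$ as boundary trivialisation, equivalently the winding of $\bfe_2$ relative to $\lambdas$ inside $\xist|_K$. Having conflated these two boundary framings, your parity argument then becomes circular: the ``boundary correction term'' you invoke, namely the discrepancy between the $\bfe_2$-framing and the surface framing along $K$, \emph{is} $\ttsl(K)$ by definition, so asserting that it ``contributes the factor forcing oddness'' assumes exactly what is to be proved. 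In the genuine argument along these lines (the one the paper's remark says requires ``a good understanding of surfaces in contact $3$-manifolds''), the odd parity has a concrete source that never appears in your sketch: one expresses $\ttsl(K)$ through the elliptic and hyperbolic singularities of the characteristic foliation of $F$, applies Poincar\'e--Hopf, and obtains $\ttsl(K)\equiv\chi(F)=1-2g\equiv 1 \pmod{2}$.

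Your fallback route is also not viable as stated: transverse stabilisation and transverse isotopy both preserve the topological knot type, so no sequence of such moves relates, say, a transverse trefoil to a transverse unknot; the claimed reduction of an arbitrary transverse knot to the base case is impossible. To connect different knot types one must allow crossing changes (self-intersections during a homotopy through transverse curves), and the key lemma---that each such crossing change alters $\ttsl$ by $\pm 2$ while transversality is maintained throughout---is precisely the content you would need to supply. For comparison, the paper itself does not prove the lemma but cites Remark~4.6.35 of Geiges's book, and sketches the clean route for $(S^3,\xist)$: relate $K$ to a Legendrian knot, use that the transverse push-off satisfies $\ttsl=\mathtt{tb}-\mathtt{rot}$, and read off the parity from front-projection combinatorics; an equally short correct alternative is to put $K$ into closed-braid form, where Bennequin's formula $\ttsl = w-n$ together with the fact that the underlying permutation of the braid is an $n$-cycle (so $w\equiv n-1 \pmod{2}$) gives oddness at once.
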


\begin{rem}
The proof of this result for transverse knots in arbitrary contact
$3$-manifolds is quite similar to that of the Poincar\'e--Hopf index theorem,
but it requires a good understanding of surfaces in contact $3$-manifolds.
For transverse knots in $(S^3,\xist)$ it follows more easily by
relating transverse knots to Legendrian knots (i.e.\ knots tangent to~$\xist)$;
specifically, combine Propositions 3.5.23 and 3.5.36 from~\cite{geig08}.
A quite direct proof, only using Reidemeister moves for knot
projections, can be given analogous to the proof of
\cite[Prop.~3.5.23]{geig08}.
\end{rem}

Thus, a given closed, orientable $3$-manifold will inherit
a parallelisation, provided it can be represented by a surgery diagram
in $S^3$ with all surgery coefficients \emph{even}. The existence of such a
surgery diagram has been established by
Kaplan~\cite{kapl79}, cf.~\cite[Theorem~9.4]{foma97}, \cite[pp.~191--2]{gost99}
or~\cite{kege18}.

\begin{rem}
A purely topological proof of Theorem~\ref{thm:main} based on Kaplan's
theorem goes as follows. First one shows that the $4$-dimensional
handlebody $W$ obtained by gluing $2$-handles to a $4$-ball $D^4$ along
a link in $S^3=\partial D^4$ with even coefficients is
parallelisable~\cite[Theorem~9.3]{foma97}. From this one finds
a quasi-framing of $M=\partial W$, i.e.\ a framing of
$M\setminus\Int(D^3)$, and then a framing of $M$ by observing that
the relevant obstructions lie in trivial groups, cf.~\cite{beli18}.
As an alternative for this second step, one can use a $4$-frame on $W$
to define a quaternionic structure $(I,J,K)$ on the tangent bundle $TW$;
the outward normal $\bfn$ along $M=\partial W$ then yields
a parallelisation of $M$: simply project $I\bfn,J\bfn,K\bfn$
onto $TM$ along~$\bfn$.

A clever replacement for Kaplan's theorem is proved
in~\cite{beli18}, from which one can then conclude as above.
\end{rem}
%
%
%
%
%
%


\begin{thebibliography}{10}
%
\bibitem{beli18}
\textsc{R. Benedetti and P. Lisca},
Framing $3$-manifolds with bare hands,
\texttt{arXiv:1806.04991}.
%
\bibitem{foma97}
\textsc{A. T. Fomenko and S. V. Matveev},
\textit{Algorithmic and Computer Methods for Three-Manifolds},
Math. Appl. \textbf{425}
(Kluwer Academic Publishers, Dordrecht, 1997).
%
\bibitem{geig08}
\textsc{H. Geiges},
\textit{An Introduction to Contact Topology},
Cambridge Stud. Adv. Math. \textbf{109}
(Cambridge University Press, Cambridge, 2008).
%
\bibitem{gego97}
\textsc{H. Geiges and J. Gonzalo},
Contact circles on $3$-manifolds,
\textit{J. Differential Geom.}
\textbf{46} (1997), 236--286.
%
\bibitem{gost99}
\textsc{R. E. Gompf and A. I. Stipsicz},
\textit{$4$-Manifolds and Kirby Calculus},
Grad. Stud. Math. \textbf{20}
(American Mathematical Society, Providence, RI, 1999).
%
\bibitem{kapl79}
\textsc{S. J. Kaplan},
Constructing framed $4$-manifolds with given almost framed boundaries,
\textit{Trans. Amer. Math. Soc.}
\textbf{254} (1979), 237--263.
%
\bibitem{kege18}
\textsc{M. Kegel},
\textit{Kirby-Kalk\"ul},
lecture notes, Universit\"at zu K\"oln, summer term 2018.
%
\bibitem{lick62}
\textsc{W. B. R. Lickorish},
A representation of orientable combinatorial $3$-manifolds,
\textit{Ann. of Math. (2)}\/
\textbf{76} (1962), 531--540.
%
\bibitem{rolf76}
\textsc{D. Rolfsen},
\textit{Knots and Links},
Math. Lecture Ser. \textbf{7}
(Publish or Perish, Berkeley, CA, 1976).
%
\bibitem{wall60}
\textsc{A. H. Wallace},
Modifications and cobounding manifolds,
\textit{Canad. J. Math.}
\textbf{12} (1960), 503--528.
%
\end{thebibliography}
\end{document}